\numberwithin{equation}{section}
\theoremstyle{plain}
\newtheorem{Theorem}{Theorem}[section]
\newtheorem{Proposition}[Theorem]{Proposition}
\newtheorem{Problem}[Theorem]{Problem}
\theoremstyle{definition}
\newtheorem{Definition}[Theorem]{Definition}
\theoremstyle{remark}
\newtheorem{Remark}{{\bf Remark}}
\newtheorem{Example}{Example}
\newcommand{\R}{{\mathbb R}}
\newcommand{\dps}{\displaystyle}
\begin{document}

\title{Maximization of the first Laplace eigenvalue of a finite graph}

\author[Gomyou]{Takumi Gomyou}
\thanks{Department of Mathematics, Osaka University,
Osaka 560-0043, Japan, gomyou@cr.math.sci.osaka-u.ac.jp}

\author[Nayatani]{Shin Nayatani}
\thanks{Graduate School of Mathematics, Nagoya University,
Chikusa-ku, Nagoya 464-8602, Japan, nayatani@math.nagoya-u.ac.jp}

\keywords{graph, Laplacian, eigenvalue, embedding}


\maketitle

\begin{abstract}
Given a length function on the edge set of a finite graph, we define a vertex-weight and an edge-weight in terms of it 
and consider the corresponding graph Laplacian. 
In this paper, we consider the problem of maximizing the first nonzero eigenvalue of this Laplacian over all edge-length 
functions subject to a certain normalization. 
For an extremal solution of this problem, we prove that there exists a map from the vertex set to a Euclidean space 
consisting of first eigenfunctions of the corresponding Laplacian so that the length function can be explicitly expressed 
in terms of the map and the Euclidean distance.  
This is a graph-analogue of Nadirashvili's result related to first-eigenvalue maximization problem on a smooth surface.
We discuss simple examples and also prove a similar result for a maximizing solution of the G\"oring-Helmberg-Wappler 
problem. 
\end{abstract}

\section*{Introduction}




Let $G=(V,E)$ be a finite graph, where $V$ (resp. $E$) is the set of vertices (resp. edges). 
For any pair of a vertex-weight $m_0\colon V\to \R_{>0}$ and an edge-weight $m_1\colon E\to \R_{\geq 0}$, one can define the corresponding Laplacian $\Delta_{(m_0,m_1)}$. 
If an edge-length function $l\colon E\to \R_{>0}$ is given, following Fujiwara \cite{Fujiwara}, we define weights $m_0$ and $m_1$ suitably in terms of $l$ and thus the Laplacian $\Delta_l := \Delta_{(m_0,m_1)}$. 
Then we regard the length function $l$ as a variable and consider the problem of maximizing the first nonzero eigenvalue of the Laplacian $\Delta_l$ over all choices of $l$ subject to a certain normalization. 
A maximizing length function does not always exist, but if one exists, 
we prove that there exists a map $\varphi\colon V \to \R^N$ consisting of first eigenfunctions of the corresponding Laplacian so that the length function can be explicitly expressed in terms of the map $\varphi$ and the Euclidean distance. 
Here, $N$ is some positive integer less than or equal to the multiplicity of the first nonzero eigenvalue.
More generally, we prove the same assertion for any extremal solution.

A similar first-eigenvalue maximization problem was formulated by G\"{o}ring-Helmberg-Wappler \cite{GoringHelmbergWappler1, GoringHelmbergWappler2}. They fix a vertex-weight $m_0$ and an edge-length function $l$, define the weighted Laplacian $\Delta_{(m_0,m_1)}$ in terms of $m_0$ and a choice of edge-weight $m_1$, and maximize the first nonzero eigenvalue of $\Delta_{(m_0,m_1)}$ over all choices of $m_1$ subject to a normalization. 
They verified that a maximizing edge-weight always exists though it may vanish on some edges. 
If this degeneracy does not happen, then there exists a map similar to the above one but with the better property that the length function exactly coincides with the pull-back of the Euclidean distance by the map. 
Historically, Fiedler \cite{Fiedler2} considered a special case of this problem, fixing $m_0\equiv 1$ and maximizing the first nonzero eigenvalue of the weighted Laplacian over all choices of $m_1$ with mean value one. He calls the maximum value as the {\em absolute algebraic connectivity} of the graph. The condition on $m_1$ is identical to G\"{o}ring et al.'s with the choice $l\equiv 1$.

Regarding the largeness of the first positive eigenvalue of the graph Laplacian, another direction of study involves expander graphs, that is, a sequence of regular graphs with fixed degree and growing number of vertices, ensuring that their first positive eigenvalues are uniformly bounded away from zero. Note that for a generic sequence of such graphs, the first positive eigenvalues will decay to zero. For this subject, the reader is referred to the comprehensive exposition \cite{HooryLinialWigderson}. 

In the background of the present work, there also is
the problem of maximizing the first Laplace eigenvalue
on a manifold. The origin of the problem is the celebrated work of Hersch \cite{Hersch}, who proved that on the two-sphere, the scale-invariant quantity $\lambda_1(g) \mathrm{Area}(g)$, where $g$ is a Riemannian metric and $\mathrm{Area}(g)$ is the area of $g$, is maximized by the metrics of round spheres in $\R^3$. Motivated by Hersch's result, Berger \cite{Berger} asked whether on an arbitrary compact manifold of dimension $n$, the scale-invariant quantity $\lambda_1(g) \mathrm{Vol}(g)^{2/n}$ was bounded from above by a constant depending only on the manifold. 
Then Urakawa \cite{Urakawa} found an explicit family of metrics $g_t$, $t>0$, on the three-sphere for which the quantity 
$\lambda_1(g_t) \mathrm{Vol}(g_t)^{2/3}$, where $\mathrm{Vol}(g)$ is the volume of $g$, diverges to infinity as $t\to\infty$. This answers in the negative the above question of Berger. 
Later, Colbois-Dodziuk \cite{ColboisDodziuk} proved, by gluing Urakawa's metrics or their analogues on higher dimensional spheres to a given manifold, that the quantity $\lambda_1(g) \mathrm{Vol}(g)^{2/n}$ is unbounded on any compact manifold. 
On the other hand, for surfaces, rich progress has been made on the Berger problem and also the problem of maximizing $\lambda_1(g) \mathrm{Area}(g)$ (which is equivalent to maximizing $\lambda_1(g)$ under the normalization $\mathrm{Area}(g)=1$). The first result (after Hersch) is due to Yang-Yau \cite{YangYau}, who proved that $\lambda_1(g) \mathrm{Area}(g)\leq 8\pi(\gamma+1)$, where $\gamma$ is the genus of the surface. (Later El Soufi-Ilias \cite{ElSoufiIlias} remarked that the constant on the right-hand side could be improved to $8\pi\left[\frac{\gamma+3}{2}\right]$.) Since the work of Yang-Yau, many important works were done and many interesting results were proved. For these, we refer the reader to \cite{LiYau, MontielRos, Nadirashvili, JLNNP, Petrides, NayataniShoda, MatthiesenSiffert, Ros, Karpukhin}. Here, we only mention the result of Nadirashvili, which states that 
if a Riemannian metric $g$ is an extremal solution of the problem of maximizing $\lambda_1(g) \mathrm{Area}(g)$ on a compact surface $M$, then there exist first eigenfunctions $\varphi_1,\dots, \varphi_N$ of the corresponding Laplacian such that $\varphi = (\varphi_1,\dots, \varphi_N)\colon M\to \R^N$
is an isometric immersion; Therefore, $\varphi$ is a minimal immersion into the sphere $S^{N-1}(\sqrt{2/\lambda_1(g)})$ by the Takahashi theorem. 

Our main result is regarded as a discrete analogue of this result of Nadirashvili. 

For the study of the eigenvalues of the graph Laplacian from the perspective of differential geometry, the reader is referred to the comprehensive monograph \cite{Chung}. 

This paper is organized as follows.
In Section 1, we consider the graph Laplacian defined from an edge-length function and formulate a maximization problem for the first nonzero eigenvalue of this Laplacian. We state a Nadirashvili-type theorem 
for an extremal solution of this problem. 
This is the main theorem and it is proved in Section 2, where a weak converse of the theorem is also proved. 
In Section 3, we find extremal solutions of the above problem for some graphs by numerical means. We also find the maps by first eigenfunctions of the main theorem. 
In Section 4, we prove an analogous result for a maximizing solution of the G\"oring-Helmberg-Wappler problem. 

\section{Preliminaries, Problem and Main Result} 
Let $G=(V, E)$ be a finite connected graph, where $V$ and $E$ are the sets of vertices and (undirected) edges, respectively. We assume that $G$ is simple, that is, that $G$ has no loops nor multiple edges. Let $uv$ denote the edge whose endpoints are $u$ and $v$. $G$ being undirected means that $uv=vu$.
Choose weights $m_0\colon V \to \R_{>0}$ and $m_1\colon E \to \R_{\geq 0}$, and let $\R^V$ denote the set of functions $\varphi\colon V\to \R$, equipped with the inner product 
$$
\langle \varphi_1, \varphi_2 \rangle = \sum_{u\in V} m_0(u)\, \varphi_1(u)\, \varphi_2(u),
\quad \varphi_1, \varphi_2\in \R^V.
$$
Then the graph Laplacian is a positive symmetric linear operator 
$\Delta_{(m_0,m_1)} : \R^V \to \R^V$, defined by
\begin{equation}\label{weighted-Laplacian}
(\Delta_{(m_0,m_1)} \varphi) (u) = \sum_{v\sim u} \frac{m_1(uv) }{m_0(u)} \left( \varphi(u) - \varphi(v) \right),\quad u\in V,
\end{equation}
where we write $v\sim u$ if $uv\in E$. 
Note that $\Delta_{(m_0,m_1)}$ has only real and nonnegative eigenvalues, always has eigenvalue $0$,
and the corresponding eigenspace consists precisely of constant functions on $V$ if $E'=\{uv\in E \mid m_1(uv)>0\}$ 
is connected, which we assume from here on. 
The second smallest eigenvalue of $\Delta_{(m_0,m_1)}$, which is positive, will be denoted by $\lambda_1(G, (m_0,m_1))$
and referred to as the {\em first nonzero eigenvalue} of $\Delta_{(m_0,m_1)}$. 
It is a standard fact that 
$\lambda_1(G, (m_0,m_1))$ is characterized variationally as
\begin{eqnarray}\label{lambda1-characterization1}
\lambda_1(G, (m_0,m_1)) &=& 
\min_{\varphi} \frac{\langle \Delta_{(m_0,m_1)} \varphi, \varphi \rangle}{
\langle \varphi, \varphi \rangle} \nonumber\\
&=& 
\min_{\varphi} \frac{\sum_{uv\in E} m_1(uv)
( \varphi(u) - \varphi(v) )^2}{\sum_{u\in V} m_0(u) 
\varphi(u)
^2},
\end{eqnarray}
where 
the minimum is taken over all nonzero 
functions $\varphi$ such that\\ $\sum_{u\in V} m_0(u) \varphi(u)=0$, meaning that 
$\varphi$ is orthogonal to constant functions, that is, eigenfunctions of the eigenvalue $0$.

Other than the operator language, we can employ the matrix language to describe the graph Laplacian.
To do so, we write $V=\{ 1,\dots, |V| \}$ and choose the orthonormal basis
$$
e_i\colon j\in V \mapsto \frac{\delta_{ij}}{\sqrt{m_0(i)}}\in \R,\quad i\in V, 
$$
of $\R^V$. 
Then the corresponding representation matrix $L := L_{(m_0,m_1)}$ of the Laplacian $\Delta_{(m_0,m_1)}$
is given by $L= D^{-1/2} L_0 D^{-1/2}$, where $D=\mathrm{diag}(m_0(1),\dots, m_0(|V|))$ and $L_0$ 
has diagonal components
$$
(L_0)_{ii}=\sum_{j\sim i} m_1(ij),\quad i\in V
$$
and off-diagonal components
$$
(L_0)_{ij}= \left\{ 
\renewcommand{\arraystretch}{1.5}
\begin{array}{cc} - m_1(ij), 
& 
ij\in E,\\ 
0, & ij\notin E. 
\end{array}
\right. 
$$
The matrix $L$ is positive, symmetric, and has eigenvalue $0$ with eigenvector 
$x_0 = \bigl(\sqrt{m_0(1)},\dots,\sqrt{m_0(|V|)}\bigr)$. 
Note also that the variational characterization \eqref{lambda1-characterization1} of 
$\lambda_1(G, (m_0,m_1))$ is expressed as 
\begin{equation*}
\lambda_1(G, (m_0,m_1)) = \min_{x} \frac{Lx\cdot x}{x\cdot x}, 
\end{equation*}
where $\cdot$ denotes the Euclidean inner product on $\R^{|V|}$ and the minimum is taken over 
all nonzero vectors $x\in \R^{|V|}$ such that $x\cdot x_0=0$.

\begin{Remark}\label{fvcorr}
Associated with the operator-matrix correspondence above, it should be noted that given 
a vector $x=(x_1,\dots,x_n)\in \R^{|V|}$, the corresponding function $\varphi\in \R^V$ 
is given by 
$$
\varphi(i) = x_i/\sqrt{m_0(i)},\quad 1\leq i\leq |V|. 
$$
If $\varphi, \psi\in \R^V$ correspond to $x,y\in \R^{|V|}$, respectively, then 
$\langle \varphi, \psi \rangle = x\cdot y$. 
This observation is necessary in working with explicit examples in Section 3. 
\end{Remark}

We now formulate a first-eigenvalue maximization problem whose variable is the edge-length function 
$l\colon E \to \R_{>0}$. 
Following Fujiwara \cite{Fujiwara}, for a given $l$, we define a vertex-weight $m_0\colon V \to \R_{>0}$ 
and an edge-weight $m_1 \colon E \to \R_{>0}$ by 
$$
m_0(u) = \sum_{v \sim u} l(uv),\,\, u \in V\quad \mbox{and}\quad 
m_1(uv) = \frac{1}{l(uv)},\,\, uv \in E, 
$$
respectively. 
Let $\Delta_l$ denote the corresponding Laplacian. 

\begin{Problem}\label{maxspecGN}
Over all edge-length functions $l$, subject to the normalization 
\begin{equation}\label{constraint}
\sum_{u \in V} m_0(u) = 2 \sum_{uv\in E} l(uv) = 1, 
\end{equation}
maximize the first nonzero eigenvalue $\lambda_1(G,l)$ of $\Delta_{l}$.
\end{Problem}

\begin{Remark} 
In the definition of the weighted Laplacian, it is standard to assume that 
$m_0(u)=\sum_{v\sim u} m_1(u,v)$ (cf.~\cite{Chung}). 
The above definition of $\Delta_l$ by Fujiwara deviates from this convention. 
However, it has a merit (cf.~\cite{Fujiwara}): the eigenvalues of the Laplacian 
of a compact Riemannian manifold $(M,g)$ are approximated in a certain coarse sense 
by the eigenvalues of Fujiwara's Laplacian of  $1/n$-nets in $M$ equipped with 
appropriate graph structures and edge-length functions $l_n\equiv 1/n$, as 
$n\to \infty$.\\ 
\indent 
The squared edge-length function $l^2$ is regarded as a graph analogue of the 
Riemannian metric. 
For $c>0$, we have $\Delta_{cl} = \frac{1}{c^2} \Delta_l$, which is consistent 
with the relation $\Delta_{c^2g} = \frac{1}{c^2} \Delta_g$ for the Reimannian 
Laplacian under the rescaling of Riemannian metric $g$. 
\end{Remark}

Following El Soufi-Ilias \cite{ElSoufiIlias} (see also \cite{Nadirashvili}), 
we introduce the following definition: 
\begin{Definition}
An edge-length function $l$ satisfying the constraint \eqref{constraint} 
is said to be an {\em extremal solution} of Problem \ref{maxspecGN} if for any 
analytic curve $l\colon I \to (\R_{>0})^E$, where $0\in I\subset \R$ is a small interval 
and $(\R_{>0})^E$ is the set of positive functions on $E$, 
such that $l(0)=l$ and $l(t)$ satisfies \eqref{constraint} for all $t\in I$, 
the left and right derivatives of $\lambda_1(l(t))$ at $t=0$ satisfy
\begin{equation}\label{extremal}
\frac{d}{dt} \lambda_1(l(t))\big|_{+0}\leq 0\leq \frac{d}{dt} \lambda_1(l(t))\big|_{-0}. 
\end{equation}
\end{Definition}

\begin{Theorem}\label{thm-embed-var-1} 
Let $l$ be an extremal solution of Problem \ref{maxspecGN}. 
Then there exist eigenfunctions $\varphi_1, \cdots, \varphi_N$ of the eigenvalue $\lambda_1(G,l)$ of $\Delta_{l}$ 
so that the map $\varphi = (\varphi_1, \cdots, \varphi_N)\colon V\to \R^N$ satisfies 
\begin{equation}\label{lipschitz}
l(uv)^2 = \frac{\| \varphi(u) - \varphi(v) \|^2}{1-\lambda_1(G,l) 
\left( \| \varphi(u) \|^2 + \| \varphi(v) \|^2 \right)} 
\end{equation}
for all $uv \in E$, where $\| \cdot \|$ is the Euclidean norm of $\R^N$. 
\end{Theorem}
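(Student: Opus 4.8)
The plan is to reformulate the conclusion as a single linear-algebraic identity in $C(E,\R)$ and then obtain it from the extremality hypothesis by a convex separation argument. Write $\lambda_0=\lambda_1(G,l)$, let $\mathcal{E}\subset C(V,\R)$ be the eigenspace of $\Delta_l$ for $\lambda_0$, and for $\varphi\in\mathcal{E}$ define $\rho_\varphi\in C(E,\R)$ by
\[
\rho_\varphi(uv)=\frac{(\varphi(u)-\varphi(v))^2}{l(uv)^2}+\lambda_0\bigl(\varphi(u)^2+\varphi(v)^2\bigr),\qquad uv\in E.
\]
Rearranging \eqref{lipschitz} and summing over the components of $\varphi=(\varphi_1,\dots,\varphi_N)$ shows that the assertion of the theorem is \emph{equivalent} to the existence of eigenfunctions $\varphi_1,\dots,\varphi_N\in\mathcal{E}$ with $\sum_{i=1}^N\rho_{\varphi_i}=\mathbf{1}_E$, where $\mathbf{1}_E$ is the constant function $1$ on $E$. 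Thus the whole theorem reduces to proving that $\mathbf{1}_E$ lies in the convex cone generated by $\{\rho_\varphi:\varphi\in\mathcal{E}\}$.

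The next step is to compute the first variation of the Rayleigh quotient. Put $Q(l,\varphi)=\sum_{uv}(\varphi(u)-\varphi(v))^2/l(uv)$ and $P(l,\varphi)=\sum_u m_0(u)(\varphi(u)-\overline\varphi)^2$, and fix a direction $h\in C(E,\R)$ with $\sum_{uv}h(uv)=0$, so that the linear curve $l+th$ is admissible and preserves \eqref{constraint}. For a fixed normalized eigenfunction $\varphi$ (so $\overline\varphi=0$ and $P(l,\varphi)=1$) I would show, using $m_0(u)=\sum_{v\sim u}l(uv)$ and $m_1=1/l$, that
\[
\frac{d}{dt}\Big|_{t=0}R(l+th,\varphi)=-\sum_{uv\in E}h(uv)\,\rho_\varphi(uv),
\]
where the contributions from the drift of $\overline\varphi$ vanish because $\overline\varphi=0$ at $t=0$. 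Invoking the Rellich analytic perturbation theorem for the generalized eigenvalue problem $Q(l+th,\cdot)=\lambda\,P(l+th,\cdot)$, the branches emanating from $\lambda_0$ are analytic, the derivative at $t=0$ of each branch is an eigenvalue of the symmetric form $\varphi\mapsto\frac{d}{dt}R(l+th,\varphi)$ on $\mathcal{E}$, and the lowest branch—which equals $\lambda_1(l+th)$ near $t=0$—has right derivative equal to the least such eigenvalue. Hence
\[
\frac{d^+}{dt}\Big|_{t=0}\lambda_1(l+th)=\min_{\varphi\in\mathcal{E},\,P(l,\varphi)=1}\Bigl(-\sum_{uv}h(uv)\rho_\varphi(uv)\Bigr).
\]
Since $l+th$ is admissible for every $h\perp\mathbf{1}_E$, extremality forces this right derivative to be $\le 0$, i.e. $\max_{\varphi\in\mathcal{E},\,P(l,\varphi)=1}\sum_{uv}h(uv)\rho_\varphi(uv)\ge 0$ for all $h$ with $\sum_{uv}h(uv)=0$.

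Finally I would carry out the separation. Let $\widehat G$ be the convex hull of the set $\{\rho_\varphi:\varphi\in\mathcal{E},\,P(l,\varphi)=1\}$, which is compact (the normalization set is a compact ellipsoid in the finite-dimensional space $\mathcal{E}$, $\rho$ is continuous, and Carath\'eodory keeps the hull compact). The inequality just established says exactly that the support function of $\widehat G$ is nonnegative on the hyperplane $\mathbf{1}_E^{\perp}$, and separating a point from a compact convex set shows this is equivalent to $\widehat G\cap\R\,\mathbf{1}_E\neq\emptyset$; that is, there is a convex combination $\sum_j t_j\rho_{\varphi_j}=s\,\mathbf{1}_E$. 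Because $\rho_\varphi(uv)\ge 0$ and is strictly positive somewhere whenever $\varphi\neq 0$, pairing with $\mathbf{1}_E$ gives $s>0$, and setting $\psi_j=\sqrt{t_j/s}\,\varphi_j$ yields $\sum_j\rho_{\psi_j}=\mathbf{1}_E$, which is precisely \eqref{lipschitz}. I expect the main obstacle to be the variational step: making the one-sided derivative formula for $\lambda_1$ rigorous in the presence of a possibly multiple eigenvalue $\lambda_0$ and of an inner product (through $m_0$) and orthogonality-to-constants condition that themselves depend on $l$; this is where the analytic perturbation theory and the careful bookkeeping of the $\overline\varphi$-terms are essential.
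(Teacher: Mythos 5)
Your proposal is correct and follows essentially the same route as the paper's proof (the El Soufi--Ilias scheme): your $\rho_\varphi$ is exactly the paper's $q_\varphi$, the first-variation computation agrees with the paper's formula \eqref{lambda1-derivative} specialized to $\dot{m}_0(u)=\sum_{v\sim u}\dot{l}(uv)$, $\dot{m}_1=-l^{-2}\dot{l}$, and the hyperplane-separation step placing $\mathbf{1}$ in the convex cone of the $q_\varphi$'s, followed by quadratic rescaling, is the paper's Claim 2 and final step. The only cosmetic differences are that you differentiate the Rayleigh quotient rather than the eigenvalue equation, and you replace the paper's intermediate-value-theorem step (Claim 1) by a direct separation of the compact convex hull of the normalized $\rho_\varphi$'s from the line $\R\,\mathbf{1}$, which is an equivalent use of the same sign condition on the branch derivatives.
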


\begin{Remark} 
(i)\,\, Note that we may assume that $N$, the dimension of the ambient Euclidean space, 
is at most the multiplicity, that is, the dimension of the eigenspace, of $\lambda_1(G,l)$. 
(Otherwise, $\varphi_1,\dots,\varphi_N$ should satisfy linear relations and then the image 
of the map $\varphi$ lies in a proper subspace of $\R^N$.) 
Since the multiplicity of $\lambda_1(G,l)$ is less than or equal to $|V|-1$, we obtain 
$N\leq |V|-1$.

\smallskip\noindent
(ii)\,\, Given any positive constant $C$, the map $\widetilde{\varphi}=\sqrt{C}\, \varphi$, 
where $\varphi$ is as in the theorem, satisfies 
$$
l(uv)^2 = \frac{\| \widetilde{\varphi}(u) - \widetilde{\varphi}(v) \|^2}{C-\lambda_1(G,l) 
\left( \| \widetilde{\varphi}(u) \|^2 + \| \widetilde{\varphi}(v) \|^2 \right)} 
$$
for all $uv \in E$. 

\smallskip\noindent
(iii)\,\, In the theorem, if the function $uv\in E\mapsto \| \varphi(u) \|^2 + \| \varphi(v) \|^2\in \R$ 
is constant ($:= C'$), then the map $\widehat{\varphi} = \varphi/\sqrt{1-\lambda_1(G,l)C'}$ 
satisfies $l(uv) = \| \widehat{\varphi}(u) - \widehat{\varphi}(v) \|$ for all $uv \in E$, that is, 
$\widehat{\varphi}$ is an isometric map with respect to $l$, meaning that 
the pull-back of the Euclidean distance by $\widehat{\varphi}$ coincides with $l$. 
This is, however, not possible in general. See Example \ref{example-path}. 

\smallskip\noindent
(iv)\,\, When the vertex set of a graph is equipped with a metric, the problem 
of finding a bi-Lipschitz embedding with small distortion into a Euclidean space has been 
extensively studied (cf.~\cite{LinialLondonRabinovich}).
In this context, we observe that \eqref{lipschitz} implies the existence of $D>0$ such that 
$$
\frac{1}{D} l(uv)\leq \|\varphi(u)-\varphi(v)\|\leq l(uv)\quad \mbox{for all $uv\in E$}, 
$$
meaning that the map $\varphi$ is locally bi-Lipschitz. 
Additionally, by the triangle inequality, we can derive 
$$
\|\varphi(u)-\varphi(v)\|\leq d_l(u, v)\quad \mbox{for all $u, v\in V$}, 
$$
where $d_l$ is the metric on $V$ defined by 
$
d_l(u,v) = \min \sum_{i=1}^s l(u_{i-1}u_i),
$
with the minimum taken over all paths $u_0u_1,u_1u_2,\dots,u_{s-1}u_s$ joining 
$u_0=u$ and $u_s=v$.
Thus, $\varphi$ is globally $1$-Lipschitz.
\end{Remark}

\section{Proof of Theorem \ref{thm-embed-var-1}}

First we consider deformations of vertex-weight $m_0$ and edge-weight $m_1$ depending analytically on the deformation parameter, 
and compute the derivatives of analytic branches of the first nonzero eigenvalue of the Laplacian. 
Fix $m_0$ and $m_1$, and assume $m_1(uv)>0$ for all $uv\in E$. 
Consider analytic curves 
$$
m_0\colon I \to (\R_{>0})^V,\quad m_1\colon I \to (\R_{>0})^E
$$
such that $m_0(0)=m_0$ and $m_1(0)=m_1$. 
Let $\mu$ denote the multiplicity of the first nonzero eigenvalue $\lambda_1(G, m_0,m_1)$ of the Laplacian 
$\Delta_{(m_0,m_1)}$. 
Then, by a classical result of Rellich (see \cite[p.~33, Theorem 1]{Rellich}), there are a neighborhood 
$I'\subset I$ of $0$, analytic functions $\lambda^{(i)}_1 \colon I' \to \R_{>0}$ and analytic maps 
$\varphi^{(i)} \colon I' \to \R^V$, $1\leq i\leq \mu$, satisfying 
\begin{enumerate}
\renewcommand{\theenumi}{\roman{enumi}}
\renewcommand{\labelenumi}{(\theenumi)}
\item $\lambda^{(i)}_1(0) = \lambda_1(G, m_0,m_1)$,\quad $1 \leq i \leq \mu$, 
\item $\Delta_{(m_0(t),m_1(t))} \varphi^{(i)}(t) = \lambda^{(i)}_1(t)\, \varphi^{(i)}(t)$,
\quad $t\in I'$, \ $1 \leq i \leq \mu$, 
\item $\langle \varphi^{(i)}(t), \varphi^{(j)}(t) \rangle_{t} = \delta_{ij}$, 
\quad $t\in I'$, \ $1 \leq i, j \leq \mu$,
\end{enumerate}
where $\langle \cdot, \cdot \rangle_t$ is the inner product on $\R^V$ with respect to the 
vertex-weight $m_0(t)$.

\begin{Remark}
In fact, Rellich considered a matrix whose components are complex-analytic functions of complex variable $t$ 
and which is hermitian for real $t$. 
His result can be applied to our situation since the representation matrix (see Section 1) 
of $\Delta_{(m_0(t),m_1(t))}$ can be extended to such a matrix. 
(This is simply because any real-analytic function can be extended to a complex-analytic function.) 
Our situation is special in that the extended matrix is real (therefore, symmetric since it is hermitian) 
for real $t$. 
\end{Remark}

We will calculate $\dot{\lambda}^{(i)}_1 := \frac{d}{dt}\lambda^{(i)}_1(t)|_{t=0}$. 
We also denote $\varphi^{(i)} := \varphi^{(i)}(0)$ and $\dot{\varphi}^{(i)} := 
\frac{d}{dt}\varphi^{(i)}(t)|_{t=0}$. 
First, by differentiating both sides of 
$\Delta_{(m_0(t),m_1(t))} \varphi^{(i)}(t) = \lambda^{(i)}_1(t) \varphi^{(i)}(t)$ at $t=0$, we obtain 
\begin{equation*}
\left.\frac{d}{dt} \Delta_{(m_0(t),m_1(t))} \right|_{t=0} \varphi^{(i)} + \Delta_{(m_0,m_1)} \dot{\varphi}^{(i)} 
= \dot{\lambda}^{(i)}_1 \varphi^{(i)} + \lambda_1(G, m_0,m_1) \dot{\varphi}^{(i)}.
\end{equation*}
By taking the $m_0$-inner product with $\varphi^{(i)}$, we obtain 
\begin{equation*}
\left\langle \left.\frac{d}{dt} \Delta_{(m_0(t),m_1(t))} \right|_{t=0} \varphi^{(i)}, \varphi^{(i)} \right\rangle 
+ \langle \Delta_{(m_0,m_1)} \dot{\varphi}^{(i)}, \varphi^{(i)} \rangle 
= \dot{\lambda}^{(i)}_1 + \lambda_1(G, m_0,m_1) \langle \dot{\varphi}^{(i)}, \varphi^{(i)} \rangle. 
\end{equation*}
Since $\Delta_{(m_0,m_1)}$ is a symmetric operator, 
$$
\langle \Delta_{(m_0,m_1)} \dot{\varphi}^{(i)}, \varphi^{(i)} \rangle 
= \langle \dot{\varphi}^{(i)}, \Delta_{(m_0,m_1)} \varphi^{(i)} \rangle 
= \lambda_1(G, m_0,m_1) \langle \dot{\varphi}^{(i)}, \varphi^{(i)} \rangle. 
$$
Thus, 
$$
\dot{\lambda}^{(i)}_1 = \left\langle \left.\frac{d}{dt} \Delta_{(m_0(t),m_1(t))} \right|_{t=0} \varphi^{(i)}, 
\varphi^{(i)} \right\rangle.
$$
The differential of the Laplacian is 
\begin{eqnarray*}
\lefteqn{\left( \left.\frac{d}{dt} \Delta_{(m_0(t),m_1(t))} \right|_{t=0} \varphi \right) (u)}\\  
&=& \sum_{v\sim u} \frac{\dot{m}_1(uv)}{m_0(u)} \left( \varphi(u) - \varphi(v) \right) 
- \sum_{v\sim u} \frac{m_1(uv) \dot{m}_0(u)}{m_0(u)^2} \left( \varphi(u) - \varphi(v) \right),
\quad u\in V.
\end{eqnarray*}
Therefore, we obtain 
\begin{eqnarray}\label{lambda1-derivative}
\dot{\lambda}^{(i)}_1 
&=& \sum_{u \in V} \sum_{v\sim u} \dot{m}_1(uv) \left( \varphi^{(i)}(u) - \varphi^{(i)}(v) \right) \varphi^{(i)}(u) 
\nonumber\\
&& - \sum_{u \in V} \sum_{v\sim u} \frac{m_1(uv) \dot{m}_0(u)}{m_0(u)} \left( \varphi^{(i)}(u) - \varphi^{(i)}(v) \right)\varphi^{(i)}(u)\\ 
&=& \sum_{uv \in E} \dot{m}_1(uv) \left( \varphi^{(i)}(u) - \varphi^{(i)}(v) \right)^2 
- \lambda_1(G, m_0,m_1) \sum_{u \in V} \dot{m}_0(u)\, \varphi^{(i)}(u)^2. \nonumber
\end{eqnarray}

\medskip\noindent
{\em Proof of Theorem \ref{thm-embed-var-1}.}\quad 
The proof follows closely El Soufi-Ilias' proof \cite{ElSoufiIlias} of the Nadirashvili theorem \cite{Nadirashvili} 
mentioned in Introduction. 

Let $l\colon I \to (\R_{>0})^E$ be an analytic curve such that $l(0)=l$. 
Plugging 
$$
\dot{m}_0(u) = \sum_{v\sim u} \dot{l}(uv)\quad \mbox{and}\quad \dot{m}_1(uv) = - l(uv)^{-2}\dot{l}(uv) 
$$
into \eqref{lambda1-derivative}, we obtain 
\begin{eqnarray*}
\dot{\lambda}^{(i)}_1 
&=& - \sum_{uv \in E} l(uv)^{-2}\dot{l}(uv) \left( \varphi^{(i)}(u) - \varphi^{(i)}(v) \right)^2 
- \lambda_1(G,l) \sum_{u \in V} \sum_{v\sim u} \dot{l}(uv) \varphi^{(i)}(u)^2\\ 
&=& - \sum_{uv \in E} \dot{l}(uv) \left[ l(uv)^{-2} \left( \varphi^{(i)}(u) - \varphi^{(i)}(v) 
\right)^2 + \lambda_1(G,l) \left( \varphi^{(i)}(u)^2 + \varphi^{(i)}(v)^2 \right) \right]. 
\end{eqnarray*}

For $\varphi\in \R^V$, we define a function $q_\varphi\colon E\to \R$ by 
$$
q_\varphi(uv) = l(uv)^{-2} \left( \varphi(u) - \varphi(v) \right)^2 + \lambda_1(G,l) 
\left( \varphi(u)^2 + \varphi(v)^2 \right), 
$$
so that 
$$
\dot{\lambda}_1^{(i)} = - \sum_{uv\in E} \dot{l}(uv) q_{\varphi^{(i)}}(uv). 
$$

\medskip\noindent
{\bf Claim 1.}\quad 
For any function $\rho\colon E \to \R$ satisfying $\sum_{uv\in E} \rho(uv) = 0$, 
there exists a first eigenfunction $\varphi$ of the Laplacian $\Delta_l$ such that
\begin{equation*}
\sum_{uv\in E} \rho(uv)\, q_{\varphi}(uv) = 0.
\end{equation*} 

\medskip
Choose the curve $l(t)$ so that $\dot{l} = \rho$ and $l(t)$ satisfies the constraint 
\eqref{constraint} for all $t\in I$. 
(In fact, it suffices to set $l(t) = l + t \rho$.)
Since $l=l(0)$ is an extremal solution, we must have 
\begin{eqnarray*}
&& \frac{d}{dt} \lambda_1(G,l(t))\big|_{-0} 
= \max_{1\leq i\leq \mu} \dot{\lambda}^{(i)}_1 
= - \min_{1\leq i\leq \mu} \sum_{uv \in E} \dot{l}(uv)\, q_{\varphi^{(i)}}(uv) \geq 0
\quad \mbox{and}\quad \\
&& \frac{d}{dt} \lambda_1(G,l(t))\big|_{+0} 
= \min_{1\leq i\leq \mu} \dot{\lambda}^{(i)}_1 
= - \max_{1\leq i\leq \mu}\sum_{uv \in E} \dot{l}(uv)\, q_{\varphi^{(i)}}(uv) \leq 0. 
\end{eqnarray*}
Thus one finds $1\leq i_1, i_2\leq \mu$ such that 
$$
\sum_{uv \in E} \rho(uv)\, q_{\varphi^{(i_1)}}(uv)\leq 0\leq 
\sum_{uv \in E} \rho(uv)\, q_{\varphi^{(i_2)}}(uv). 
$$
Now suppose $\mu=\dim E_1(l)\geq 2$, where $E_1(l)$ is the $\lambda_1(G,l)$-eigenspace 
of $\Delta_l$. 
Then one can choose a continuous curve $\varphi\colon [0,1]\to E_1(l)\setminus \{0\}$ 
so that $\varphi(0)=\varphi^{(i_1)}$, $\varphi(1)=\varphi^{(i_2)}$. 
By the intermediate value theorem, one finds $t_0\in [0,1]$ such that 
$$
\sum_{uv \in E} \rho(uv)\, q_{\varphi(t_0)}(uv) = 0.  
$$
If $\mu=1$, then $\lambda^{(1)}(t) = \lambda_1(G, l(t))$ is analytic in $t$ and the 
extremality of $l$ is equivalent to $\frac{d}{dt} \lambda_1(G,l(t))=0$, which implies 
$$
\sum_{uv \in E} \rho(uv)\, q_{\varphi^{(1)}}(uv) = 0.  
$$

\medskip
Let $\mathcal{C}$ be the convex hull 
of the set $\{ q_{\varphi} \mid \varphi\in E_1(l) \}$ in $\R^E \cong \R^{|E|}$. 
Note that $\mathcal{C}$ is a closed convex cone. 

\medskip\noindent
{\bf Claim 2.}\quad The constant function ${\bf 1}\in \mathcal{C}$. 

\medskip
Suppose that ${\bf 1}\notin \mathcal{C}$. 
Then by the hyperplane separation theorem, there exists a hyperplane of $\R^E$ with a normal vector $\nu$ such that 
\begin{align}
& \langle\langle {\bf 1}, \nu \rangle\rangle = \sum_{uv\in E} \nu(uv) > 0, \label{separation1}\\
& \langle\langle q_{\varphi}, \nu \rangle\rangle = \sum_{uv\in E} q_{\varphi}(uv)\, \nu(uv) \leq 0,\quad \forall \varphi\in E_1(l), 
\label{separation2}
\end{align}
where $\langle\langle \cdot, \cdot \rangle\rangle$ denotes the canonical inner product
on $\R^E$. The function 
$\dps \rho = \nu - \frac{\langle\langle \nu, {\bf 1}\rangle\rangle}{\langle\langle {\bf 1}, {\bf 1} \rangle\rangle} {\bf 1}$ 
satisfies $\sum_{uv\in E} \rho(uv) = 0$. 
On the other hand, for any $\varphi\in E_1(l)\setminus \{0\}$, we have 
$$
\langle\langle \rho, q_{\varphi} \rangle\rangle = \langle\langle \nu, q_{\varphi} \rangle\rangle 
- \frac{\langle\langle \nu, {\bf 1}\rangle\rangle}{\langle\langle {\bf 1}, {\bf 1} \rangle\rangle} \langle\langle {\bf 1}, q_{\varphi} \rangle\rangle 
< 0
$$
by \eqref{separation1}, \eqref{separation2} and the fact that $q_\varphi$ is nonnegative and positive somewhere. 
This contradicts Claim 1. 

\medskip 
By Claim 2, there exist $\varphi_k\in E_1(l)$, $1\leq k\leq N$, such that 
\begin{eqnarray*}
1 &=& \sum_{k=1}^N q_{\varphi_k} (uv)\\ 
&=& l(uv)^{-2} \|\varphi(u) - \varphi(v) \|^2 
+ \lambda_1(G,l) \left( \|\varphi(u)\|^2 + \|\varphi(v)\|^2 \right),\quad uv\in E, 
\end{eqnarray*}
where $\varphi=(\varphi_1,\dots,\varphi_N)$. 
Slight rearrangement gives the conclusion of Theorem \ref{thm-embed-var-1}. 
\hfill{$\square$}

\medskip
The following proposition is a weak converse to Theorem \ref{thm-embed-var-1}. 

\begin{Proposition}
Let $l$ be an edge-length function satisfying the constraint \eqref{constraint}, and suppose that 
there exist a positive constant $C$ and an orthonormal basis $\{ \varphi_1, \cdots, \varphi_N \}$ 
of $E_1(l)$, the $\lambda_1(G,l)$-eigenspace of $\Delta_l$, so that the map 
$\varphi = (\varphi_1, \cdots, \varphi_N) : V\to \R^N$ satisfies 
\begin{equation}
l(uv)^2 = \frac{\| \varphi(u) - \varphi(v) \|^2}{C-\lambda_1(G,l) 
\left( \| \varphi(u) \|^2 + \| \varphi(v) \|^2 \right)} 
\end{equation}
for all $uv \in E$. 
Then $l$ is an extremal solution of Problem \ref{maxspecGN}. 
\end{Proposition}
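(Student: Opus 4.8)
The plan is to reduce everything to the first-order derivative formula already established in the proof of Theorem \ref{thm-embed-var-1} and to show that the hypothesis forces the sum of the eigenvalue derivatives along any admissible analytic branch to vanish.

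First I would unwind the definition of extremal solution. For an admissible analytic curve $l(t)$ with $l(0)=l$, the first nonzero eigenvalue satisfies $\lambda_1(G,l(t))=\min_{1\leq i\leq \mu}\lambda^{(i)}_1(t)$ for $t$ near $0$, so its one-sided derivatives at $t=0$ are governed by $\min_i\dot\lambda^{(i)}_1$ (from the right) and $\max_i\dot\lambda^{(i)}_1$ (from the left). Hence the requirement $\lambda_1(l(t))\leq\lambda_1(l(0))+o(t)$ for all admissible $l(t)$ is equivalent to
\[
\min_{1\leq i\leq \mu}\dot\lambda^{(i)}_1\leq 0\leq \max_{1\leq i\leq \mu}\dot\lambda^{(i)}_1
\]
holding for every such curve. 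Both inequalities follow at once once I show $\sum_{i=1}^\mu\dot\lambda^{(i)}_1=0$, since a finite family of real numbers summing to zero can be neither all positive nor all negative. (The degenerate case $\mu=1$ simply forces $\dot\lambda^{(1)}_1=0$.)

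Next I would rewrite the hypothesis. Clearing the denominator and dividing by $l(uv)^2$ turns the displayed identity into $\sum_{i=1}^N q_{\varphi_i}(uv)=C$ for every edge $uv$, where $q_\varphi$ is the function introduced in the proof of Theorem \ref{thm-embed-var-1}. Since $\{\varphi_1,\dots,\varphi_N\}$ is an $m_0$-orthonormal basis of $E_1(l)$, we have $N=\mu$. The key observation is that $\sum_i q_{\varphi^{(i)}}$ is the \emph{same} for every $m_0$-orthonormal basis $\{\varphi^{(i)}\}$ of $E_1(l)$: indeed $\sum_i\varphi^{(i)}(u)^2$ and $\sum_i(\varphi^{(i)}(u)-\varphi^{(i)}(v))^2$ are the squared Euclidean norms of $(\varphi^{(i)}(u))_i$ and $(\varphi^{(i)}(u)-\varphi^{(i)}(v))_i$ in $\R^\mu$, which transform orthogonally under a change of orthonormal basis and are therefore basis-independent. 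Consequently the identity $\sum_i q_{\varphi^{(i)}}\equiv C$ holds in particular for the analytic branches $\varphi^{(i)}=\varphi^{(i)}(0)$ entering the derivative formula.

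Finally I would sum the specialized derivative formula $\dot\lambda^{(i)}_1=-\sum_{uv\in E}\dot l(uv)\,q_{\varphi^{(i)}}(uv)$ over $i$ and interchange summations to get $\sum_i\dot\lambda^{(i)}_1=-C\sum_{uv\in E}\dot l(uv)$; differentiating the normalization \eqref{constraint} yields $\sum_{uv\in E}\dot l(uv)=0$, whence $\sum_i\dot\lambda^{(i)}_1=0$ and the proof is complete. I expect the only genuinely delicate point to be reconciling the two orthonormal bases — the fixed basis $\{\varphi_1,\dots,\varphi_N\}$ appearing in the hypothesis versus the analytic branches $\{\varphi^{(i)}(0)\}$ produced by perturbation theory — and this is precisely what the basis-independence of $\sum_i q_{\varphi^{(i)}}$ resolves; everything else is routine clearing of denominators together with a single use of the constraint.
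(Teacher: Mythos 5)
Your proof is correct and takes essentially the same route as the paper's: both arguments reduce to showing $\sum_{i=1}^{N}\dot\lambda^{(i)}_1=-C\sum_{uv\in E}\dot l(uv)=0$ for every admissible curve, using the identity $\sum_{i=1}^{N}q_{\varphi^{(i)}}\equiv C$ obtained by clearing denominators in the hypothesis, where the paper then concludes via the averaging inequality $\lambda_1(l(t))\leq \frac{1}{N}\sum_{i=1}^{N}\lambda^{(i)}_1(t)$ while you conclude via the equivalent sign argument $\min_i\dot\lambda^{(i)}_1\leq 0\leq\max_i\dot\lambda^{(i)}_1$ on the one-sided derivatives. Your explicit check that $\sum_i q_{\varphi^{(i)}}$ is independent of the choice of $m_0$-orthonormal basis of $E_1(l)$ correctly fills in a point the paper uses tacitly when it identifies $\sum_i \bigl( \varphi^{(i)}(u)-\varphi^{(i)}(v) \bigr)^2$ with $\| \varphi(u)-\varphi(v) \|^2$ for the analytic branches $\varphi^{(i)}(0)$.
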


\begin{proof}
Let $l\colon I \to (\R_{>0})^E$ be an analytic curve such that $l(0)=l$ and $l(t)$ satisfies the constraint 
\eqref{constraint} for all $t\in I$, and set $\rho = \left. \frac{d}{dt} l(t) \right|_{t=0}$. 
Using the same notations $\lambda^{(i)}_1$ and $\varphi^{(i)}$ as in the proof of Theorem \ref{thm-embed-var-1}, 
we have for small $t$: 
\begin{equation*}
\sum_{i=1}^N \lambda_i(l(t)) = \sum_{i=1}^N \lambda^{(i)}_1(t). 
\end{equation*}
It follows that the left-hand side is differentiable at $t=0$, and 
\begin{eqnarray*}
&& \left.\frac{d}{dt} \sum_{i=1}^N \lambda_i(l(t)) \right|_{t=0} 
\,\,=\,\, \sum_{i=1}^N \dot{\lambda}^{(i)}_1\\
&=& -\sum_{i=1}^N \sum_{uv\in E} \rho(uv) \left[ l(uv)^{-2} \left( \varphi^{(i)}(u) - \varphi^{(i)}(v) \right)^2 + \lambda_1(G,l) 
\left( \varphi^{(i)}(u)^2 + \varphi^{(i)}(v)^2 \right) \right] \\
&=& -\sum_{uv\in E} \rho(uv) \left[ l(uv)^{-2} \| \varphi(u) - \varphi(v) \|^2 + \lambda_1(G,l) 
\left( \| \varphi(u) \|^2 + \| \varphi(v) \|^2 \right) \right] \\
&=& -C \sum_{uv\in E} \rho(uv)\,\, =\,\, 0.
\end{eqnarray*} 
Therefore, 
$$
\frac{d}{dt} \lambda_1(G,l(t))\big|_{-0} = \max_{1\leq i\leq \mu} \dot{\lambda}^{(i)}_1 
\quad \mbox{and}\quad 
\frac{d}{dt} \lambda_1(G,l(t))\big|_{+0} = \min_{1\leq i\leq \mu} \dot{\lambda}^{(i)}_1 
$$
satisfy \eqref{extremal}.
\end{proof}

\section{Examples}

In this section, we consider graphs of small size and find extremal solutions of Problem \ref{maxspecGN} and the corresponding eigenfunctions satisfying \eqref{lipschitz} by numerical means using Mathematica. 
In fact, we treat all the graphs with $|V|\leq 4$ and $|E|\leq 4$ (other than the trivial one 
consisting of two vertices and one edge).

\begin{Example}\label{example-path}
Let $P_3$ be the path graph consisting of three vertices $1$, $2$, $3$ and two edges $12$, $23$.
We assign length parameters $a>0$ and $b>0$ to the edges $12$ and $23$, respectively. 
Then the weights of the vertices $1$, $2$ and $3$ are $a$, $a+b$ and $b$, respectively. 
By the constraint $2(a+b) = 1$, we have $b=(1-2a)/2$.
Thus, the eigenvalue maximization problem is a problem of the single variable $a$ constrained 
in the range $0<a<1/2$. 
The representation matrix of the Laplacian reads
$$
\begin{pmatrix}
a^{-2} & -a^{-3/2}(a+b)^{-1/2} & 0 \\ 
-a^{-3/2}(a+b)^{-1/2} & (a^{-1}+b^{-1})/(a+b) & -b^{-3/2}(a+b)^{-1/2} \\
0 & -b^{-3/2}(a+b)^{-1/2} & b^{-2}
\end{pmatrix}
$$
with $b=(1-2a)/2$.
The first nonzero eigenvalue can be explicitly computed as 
$$
\lambda_1(P_3, (a,b)) = \frac{4a^2-2a+1 -\sqrt{-112a^4+112a^3-20a^2-4a+1}}{2a^2(2a-1)^2}. 
$$
It takes its maximum value $16$ at $a=1/4$ (and $b=1/4$). 
The eigenvalue $16$ is simple with an eigen{\em vector} $(-1, 0 , 1)$ of the {\em matrix}. 
The corresponding eigen{\em function} of the Laplacian {\em operator} is 
$(\varphi(1), \varphi(2), \varphi(3))= (-2,0,2)$ (cf.~Remark \ref{fvcorr}),
which realizes $P_3$ on the real line. 
If we choose $\varphi' = 1/(8\sqrt{2}) \varphi$, then it satisfies \eqref{lipschitz}.   

For the path graph $P_4$ consisting of four vertices $1$, $2$, $3$ and $4$ and three edges $12$, $23$, $34$, 
we assign edge length parameters $a$, $b$ and $c$ to the edges $12$, $23$ and $34$, respectively.
Then the eigenvalue maximization problem is a problem of the two variables $a$ and $c$ constrained in the range $a>0$, $c>0$ and $a+c<1/2$. 
The first nonzero eigenvalue has the maximum value $\simeq 18.6694$ at $a=c \simeq 0.1905$ ($b \simeq 0.1190$).
Its multiplicity is one, and the first eigenfunction 
$$
(\varphi(1), \varphi(2), \varphi(3), \varphi(4))\simeq (0.1734, 0.0559, -0.0559, -0.1734)
$$
realizes $P_4$ on the real line and satisfies \eqref{lipschitz}. 
It is worthwhile to mention that no scalar multiple of $\varphi$ can be isometric, because 
$|\varphi(1)-\varphi(2)|/|\varphi(2)-\varphi(3)|\neq a/b$.
\end{Example}

\begin{Example}
Let $K_{1,3}$ denote the complete bipartite graph consisting of the first vertex $1$ and the second ones $2,3,4$.
Assigning length parameters $a$, $b$ and $c$ to the edges $12$, $13$ and $14$, respectively, 
the eigenvalue maximization problem is a problem of the two variables $a$ and $b$ constrained in the range $a>0$, $b>0$ and $a+b<1/2$.
The first nonzero eigenvalue has a maximum $36$ at $a=b=1/6$ ($=c$), and its multiplicity is two 
with orthonormal eigenvectors $(1/\sqrt{2})(0, -1, 0 , 1)$, $(1/\sqrt{6})(0, -1, 2 , -1)$.  
By the eigen-map $\varphi=(\varphi_1, \varphi_2)$ consisting of orthonormal eigenfunctions 
\begin{align*}
& (\varphi_1(1), \varphi_1(2), \varphi_1(3), \varphi_1(4)) 
= \sqrt{3} (0, -1, 0 , 1),\\ 
& (\varphi_2(1), \varphi_2(2), \varphi_2(3), \varphi_2(4)) = (0, -1, 2 , -1), 
\end{align*}
$K_{1,3}$ is realized as a symmetric tripod in $\R^2$, and 
$1/(12\sqrt{2})\, \varphi$ satisfies \eqref{lipschitz}. 
\end{Example} 

For all the graphs of the following examples, the first nonzero eigenvalue diverges to positive infinity at boundary points of the constraint region and therefore, maximizing solutions of Problem \ref{maxspecGN} do not exist. However, there may exist locally maximizing or extremal solutions. 

\begin{Example}
Let $C_3$ be the cycle graph consisting of three vertices $1$, $2$, $3$ and three edges $12$, $23$, $31$.
By assigning length parameters $a$, $b$ and $c$ to the edges $12$, $23$ and $31$, respectively,
the eigenvalue maximization problem is a problem of the two variables $a$ and $b$ constrained in the range $a>0$, $b>0$ and $a+b<1/2$. 
Plot the first nonzero eigenvalue over the $a$-$b$ plane as in Figure \ref{fig:C3}.
Then it is observed that the first nonzero eigenvalue diverges to positive infinity as two of the parameters $a$, $b$ and $c$ approach $0$ (the remaining parameter approaches $1/2$). 
However, it has a local maximum $54$ at $a=b=1/6$ ($=c$), and its multiplicity is two 
with orthonormal eigenvectors $(1/\sqrt{2})(-1, 0 , 1)$, $(1/\sqrt{6})(-1, 2 , -1)$. 
By the eigen-map $\varphi=(\varphi_1, \varphi_2)$ consisting of orthonormal eigenfunctions 
\begin{align*}
& (\varphi_1(1), \varphi_1(2), \varphi_1(3)) = \sqrt{3/2}(-1, 0 , 1),\\ 
& (\varphi_2(1), \varphi_2(2), \varphi_2(3)) = (1/\sqrt{2})(-1, 2 , -1), 
\end{align*}
$C_3$ is realized as a regular triangle, and $(1/12\sqrt{3})\, \varphi$ satisfies \eqref{lipschitz}. 

There are also three saddle-type (hence, extremal) solutions; One of them is 
$(a,b,c) = ((3-\sqrt{3})/12,(3-\sqrt{3})/12,1/(2\sqrt{3}))$ and the other two are permutations of this. 
For all these solutions, the first nonzero eigenvalue is simple and $\simeq 41.5692$. 
For the above $(a,b,c)$, 
the first eigenfunctions are scalar multiples of $(-1,0,1)$ which realize $C_3$ on the real line. 
If we choose $(\varphi(1), \varphi(2), \varphi(3))\simeq 0.0873 (-1,0,1)$, then it satisfies \eqref{lipschitz}. 
However, no scalar multiple of $(-1,0,1)$ can be isometric.  

\vskip\baselineskip
\begin{figure}[htbp]
\begin{center}
\includegraphics[bb=0 0 384 362,scale=0.4]{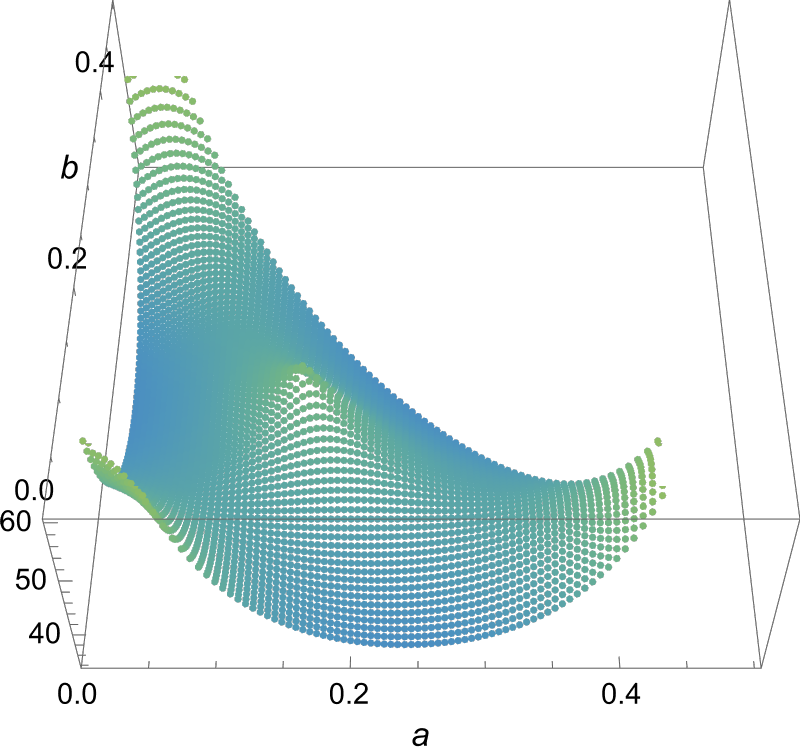}
\end{center}
\caption{Plot of the first nonzero eigenvalue on $a$-$b$ plane for $C_3$.}
\label{fig:C3}
\end{figure}    

For the $4$-cycle graph $C_4$ with length parameters $a$, $b$, $c$ and $d$ assigned similarly, the eigenvalue maximization problem is a problem of the three variables $a$, $b$ and $c$ constrained in the range $a>0$, $b>0$, $c>0$ and $a+b+c<1/2$. 
To plot the first nonzero eigenvalue in 3D, we fix one parameter, say $a$.  
Let $a_t=(1/2)(t/n) \in (0,1/2)$ for $t=1, \cdots ,n-1$.
For each $t$, we observe the behaviour of the first nonzero eigenvalue as $b$ and $c$ vary.
Then, for each $t$, the first nonzero eigenvalue has a local maximum on the segment $b=c$. (See Figure \ref{fig:C4at} in which $n=11$.) 
This implies that when the whole parameters $a$, $b$ and $c$ vary, one finds a local maximum point on the section $b=c$. 
So let $b=c$ and plot the first nonzero eigenvalue over the $a$-$b$ plane as in Figure \ref{fig:C4b=c}. 
Then it has a local maximum $64$ at $a=b=1/8$ ($=c=d$), and its multiplicity is two 
with orthonormal eigenvectors $(1/\sqrt{2})(-1, 0 , 1,0)$, $(1/\sqrt{2})(0, -1, 0 , 1)$.  
By the eigen-map $\varphi=(\varphi_1, \varphi_2)$ consisting of orthonormal eigenfunctions 
\begin{align*}
& (\varphi_1(1), \varphi_1(2), \varphi_1(3), \varphi_1(4)) = \sqrt{2} (-1, 0 , 1, 0),\\ 
& (\varphi_2(1), \varphi_2(2), \varphi_2(3), \varphi_2(4)) = \sqrt{2} (0, -1, 0 , 1), 
\end{align*}
$C_4$ is realized as a square, and $(1/16\sqrt{2})\, \varphi$ satisfies \eqref{lipschitz}. 

\vskip\baselineskip
\includegraphics[bb=0 0 776 361,scale=0.4]{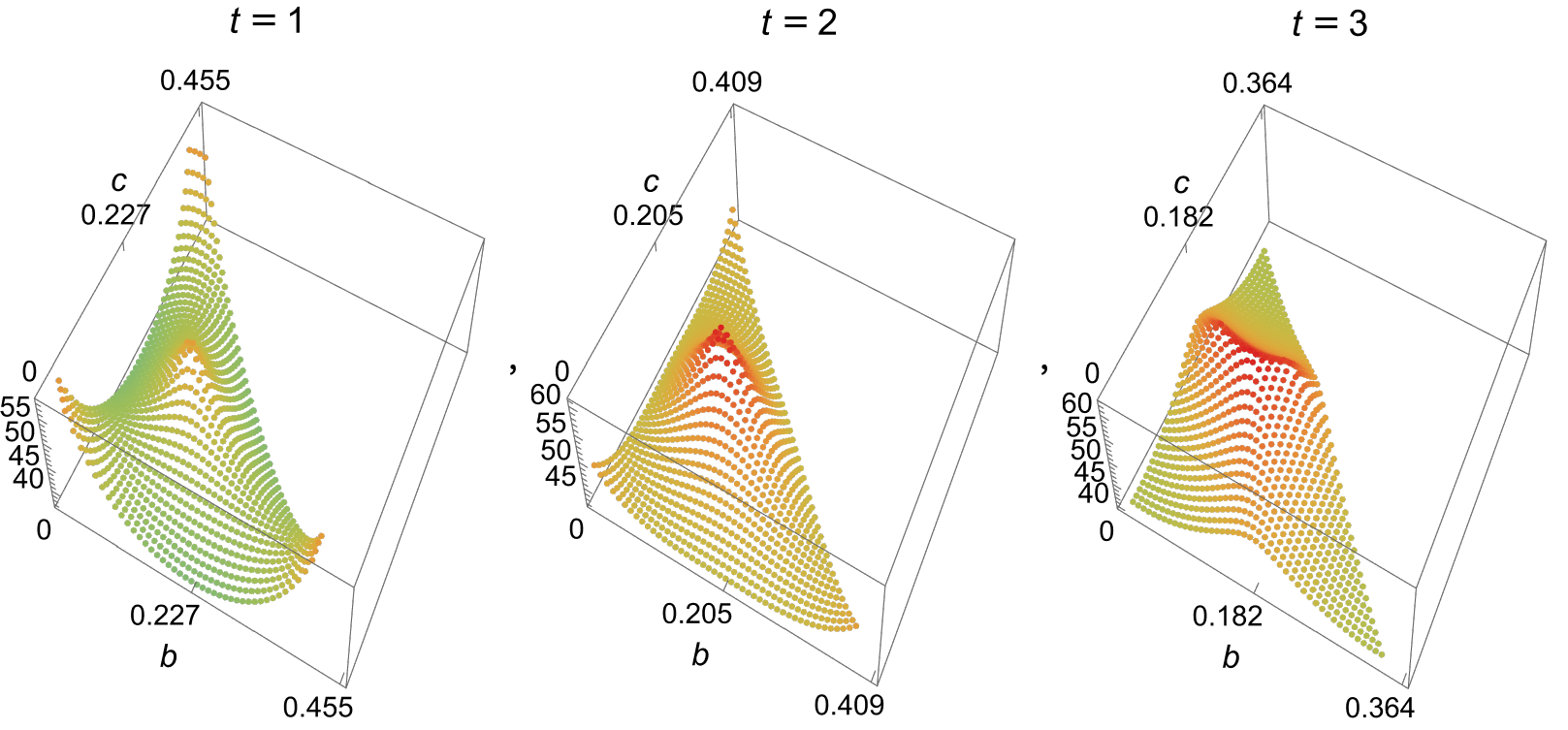}

\vskip\baselineskip
\includegraphics[bb=0 0 776 362,scale=0.4]{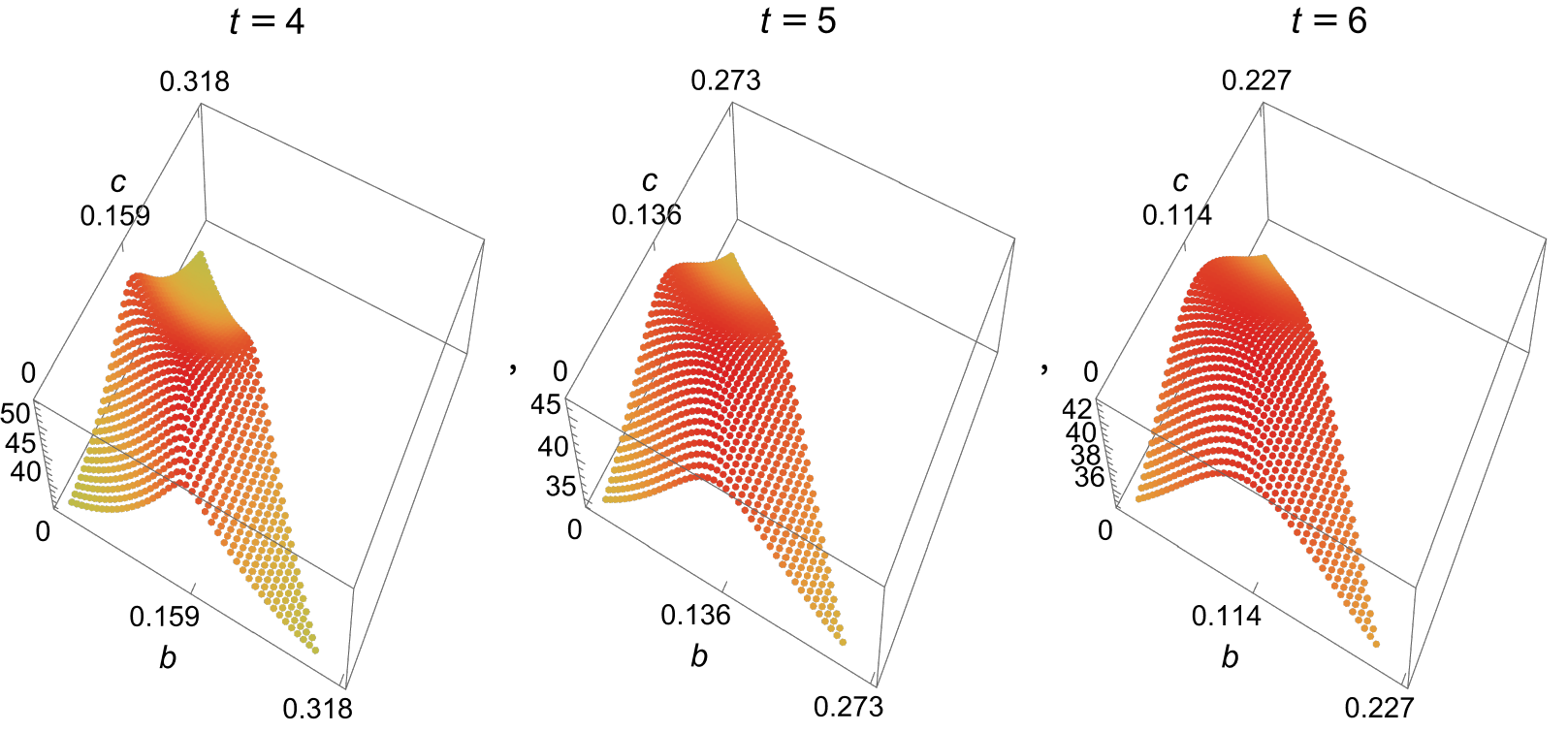}
\begin{figure}[h]
\begin{center}
\includegraphics[bb=0 0 1039 362,scale=0.4]{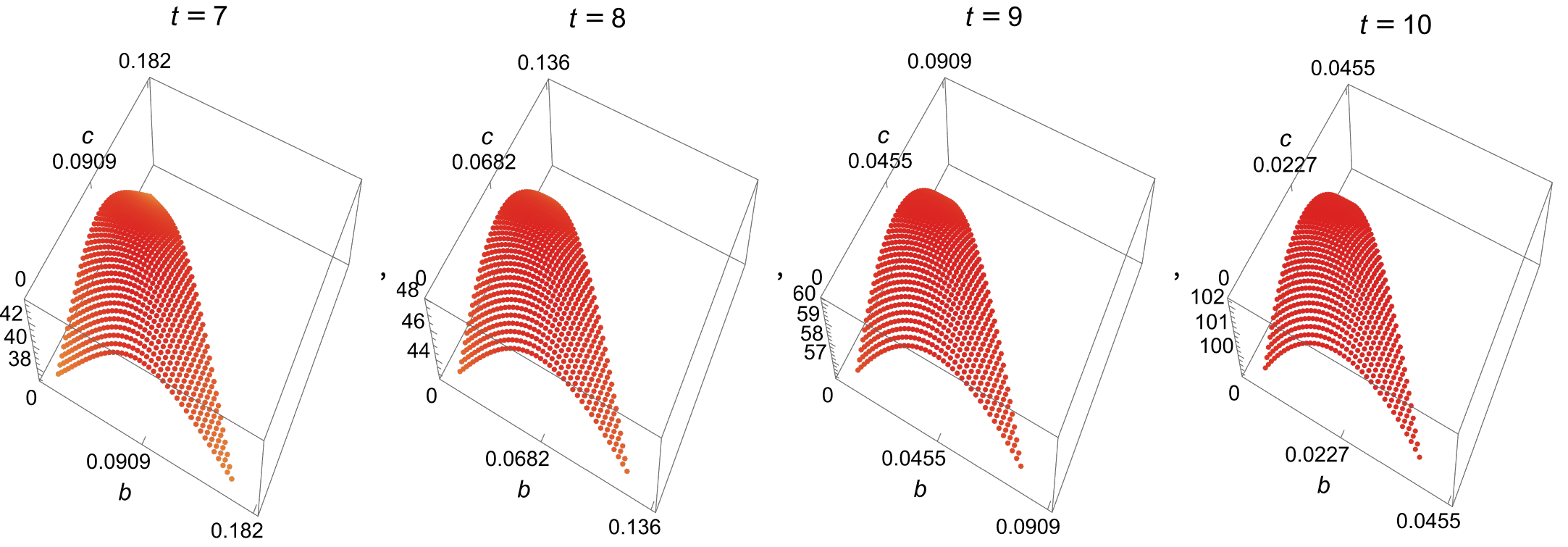}
\end{center}
\caption{Plot of the first nonzero eigenvalue on $b$-$c$ plane for each $a_t$ for $C_4$.}
\label{fig:C4at}
\end{figure}
\begin{figure}[htbp]
\begin{center}
\includegraphics[bb=0 0 392 325,scale=0.4]{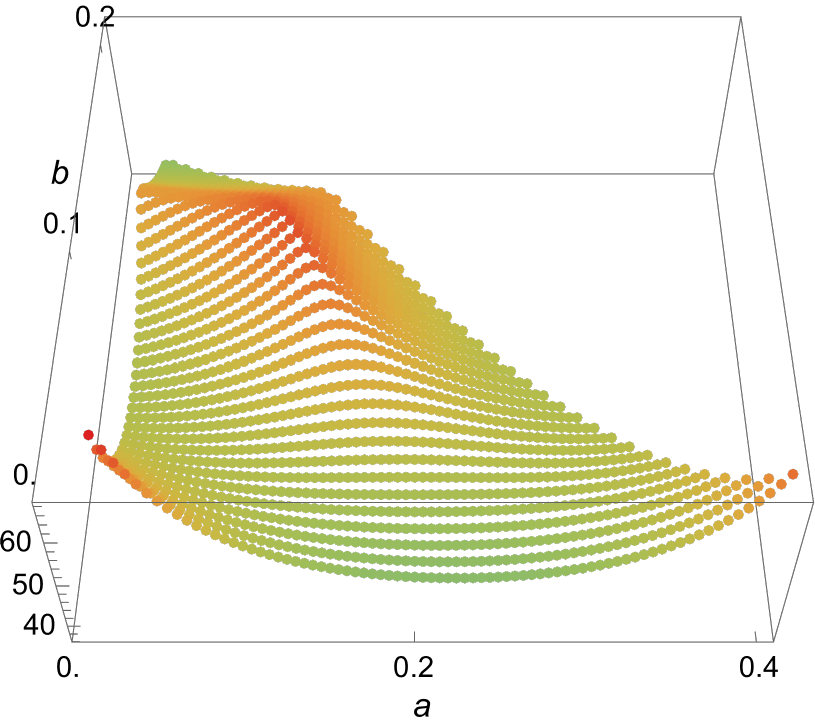}
\end{center}
\caption{Plot of the first nonzero eigenvalue on $a$-$b(=c)$ plane for $C_4$.}
\label{fig:C4b=c}
\end{figure}    
\end{Example}

\newpage
\begin{Example}
Let $G$ be a graph obtained by adding an edge $23$ to $K_{1,3}$, that is, vertices $2$ and $3$ are connected. 
Assigning length parameters $a$, $b$, $c$ and $d$ to the edges $12$, $13$, $14$ and $23$ respectively, 
the eigenvalue maximization problem is a problem of the three variables $a$, $b$ and $d$ constrained in the range $a>0$, $b>0$, $d>0$ and $a+b+d<1/2$. 
On this problem, we make a numerical observation as follows.
To plot the first nonzero eigenvalue in 3D, we fix one parameter, say $d$.  
Let $d_t=(1/2)(t/n) \in (0,1/2)$ for $t=1, \cdots ,n-1$.
For each $t$, we observe the behaviour of the first nonzero eigenvalue as $a$ and $b$ vary.
Then, as Figure \ref{fig:Gdt} shows, one finds no local maximum,
and along the segment $a=b$ the first nonzero eigenvalue diverges to positive infinity as $a\, (=b)$ and $d$ approach $0$ and $1/2$, respectively ($c$ approaches $0$).
For the values $d=1/2-10^{-6}$ and $a\simeq 2 \times 10^{-12}$, the corresponding eigen-map 
$\varphi=(\varphi_1, \varphi_2)$, consisting of orthonormal eigenfunctions 
\begin{align*}
& (\varphi_1(1), \varphi_1(2), \varphi_1(3), \varphi_1(4)) \simeq (6.5053 \times 10^{-28}, 
-1.0000, 1.0000, 1.0842 \times 10^{-16}),\\ 
& (\varphi_2(1), \varphi_2(2), \varphi_2(3), \varphi_2(4)) \simeq (6.0001 \times 10^{-9}, 
-0.0009, -0.0009, 1000.0015), 
\end{align*} 
places the vertices $1,2,3$ almost on the $x$-axis and the vertex $4$ far on the $y$-axis. 
Thus, the embedded $G$ looks like a straight $P_3$ together with a single point located far away. 

We now observe that a certain toplological degeneration may take place as the first nonzero eigenvalue diverges. 
To do so, we let $a\to 0$ and $d\to 1/2$ along the path on which the multiplicity of the first eigenvalue is two.
In Figure \ref{fig:Gb=a}, this path is the ridge-like curve. 
Then the numerical data indicate that the parameter $c$ approaches $0$ with the order $a^{1/2}$ and therefore the components of the (symmetric) matrix representing $\Delta_l$ 
have the following orders: 
$$
\begin{pmatrix}
+O(a^{-3/2}) & - O(a^{-5/4}) & -O(a^{-5/4}) & -O(a^{-1}) \\ * & +O(a^{-1}) & -O(a^0) & 0 \\ 
* & * & +O(a^{-1}) & 0 \\ * & 0 & 0 & +O(a^{-1}) 
\end{pmatrix}.
$$
In particular, the $(2,3)$-component is negligible compared to other components. Intuitively, this should mean that the bond strength of the edge $23$ is almost zero, that is, the graph looks like $K_{1,3}$. 
Putting aside the effect of diagonal components, the difference of diverging orders of the off-diagonal components indicates that next to $23$, the edge $14$ becomes negligible and the graph should be like the disjoint union of $P_3$ and a single isolated vertex. This interpretation of topological degeneration agrees with the geometric picture as the embedded image by the eigen-map discussed above. 

%
\vskip\baselineskip
\includegraphics[bb=0 0 776 360,scale=0.4]{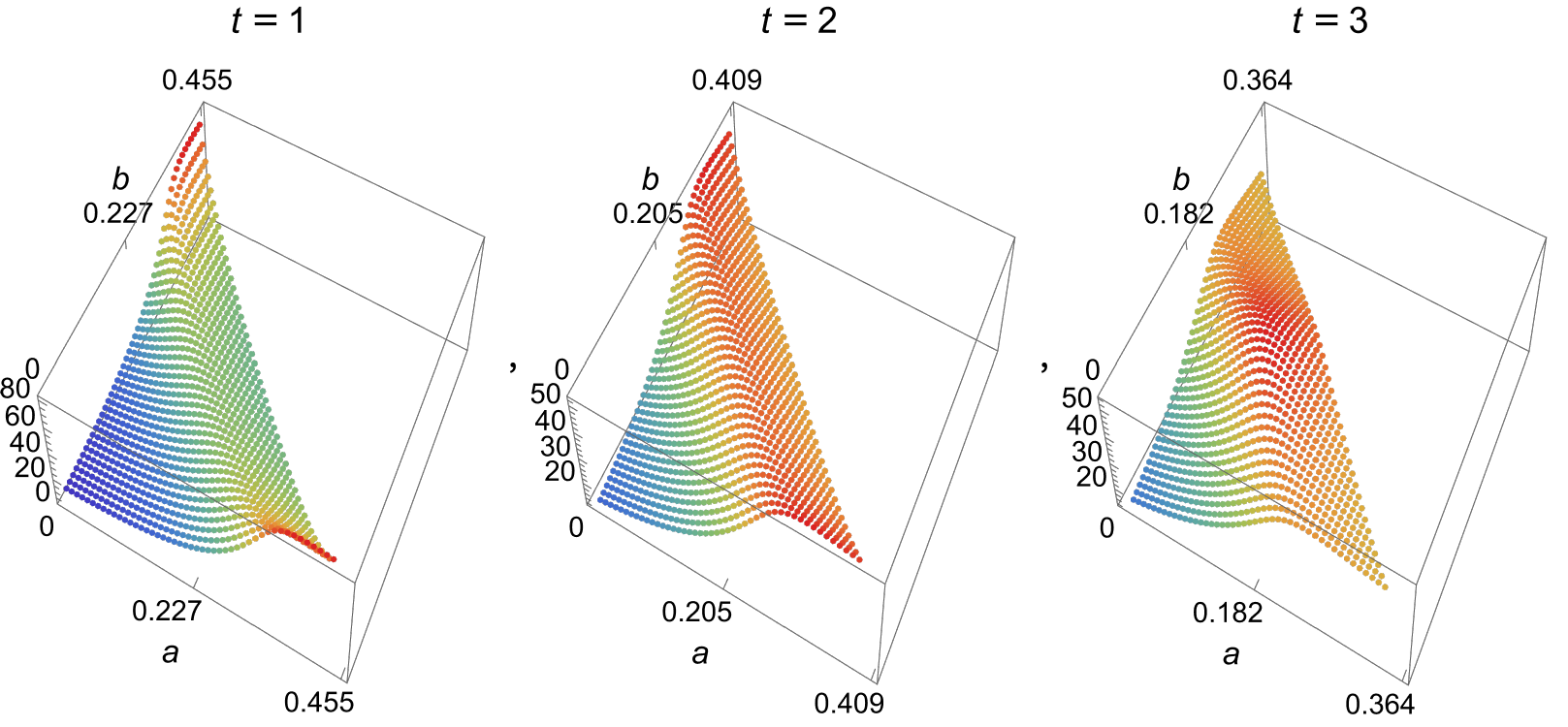}

\vskip\baselineskip
\includegraphics[bb=0 0 776 361,scale=0.4]{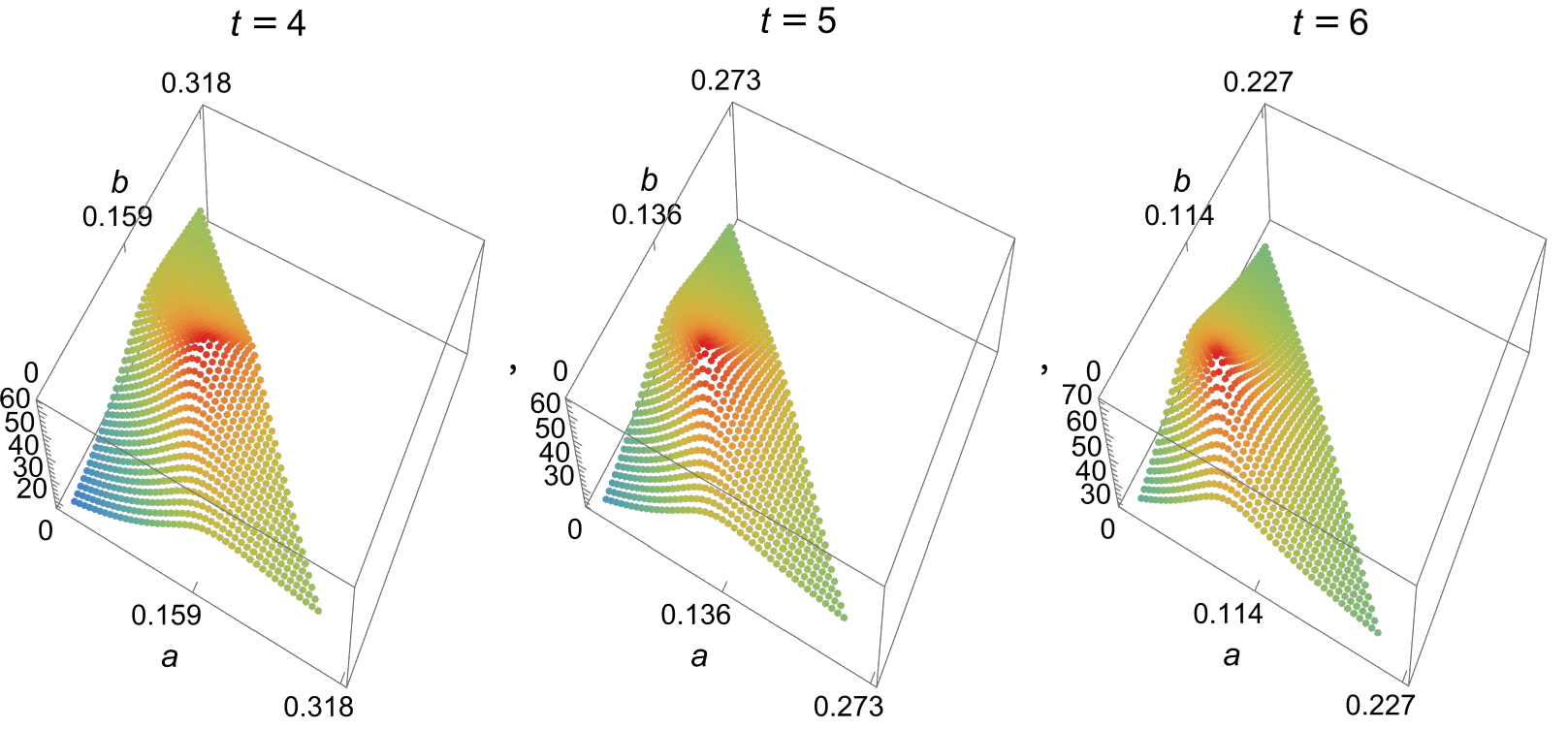}
\begin{figure}[htbp]
\begin{center}
\includegraphics[bb=0 0 1039 361,scale=0.4]{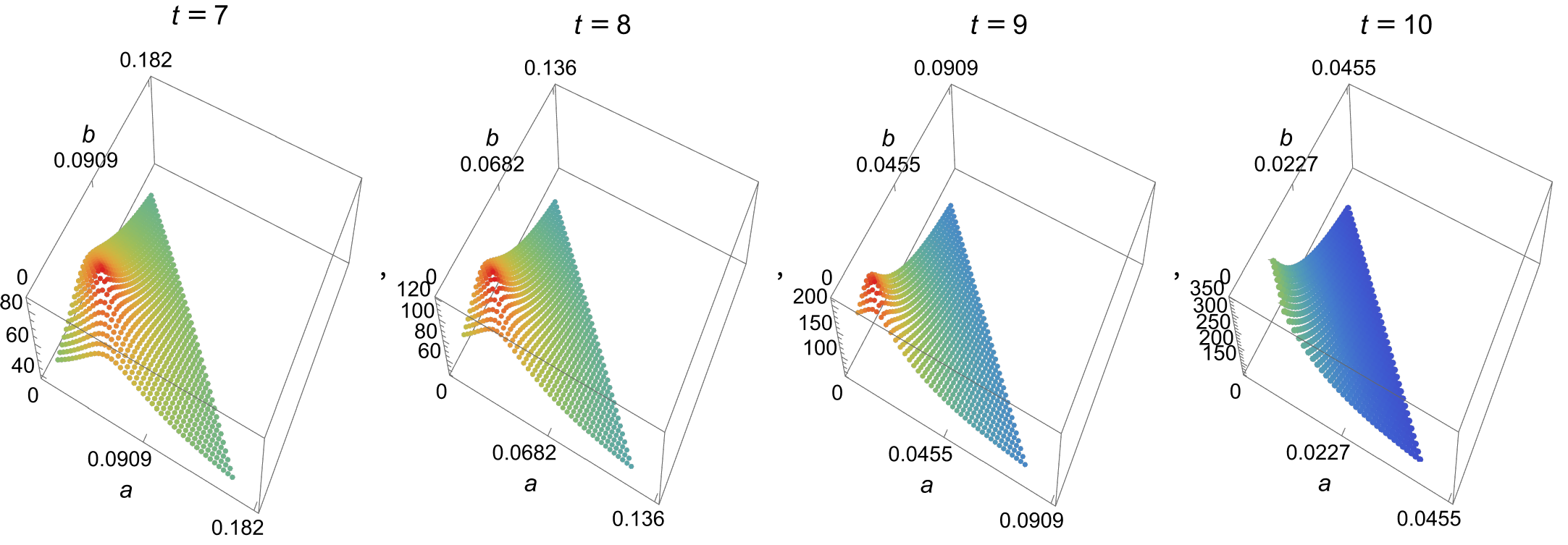}
\end{center}
\caption{Plot of the first nonzero eigenvalue on $a$-$b$ plane for each $d_t$ for $G$.}
\label{fig:Gdt}
\end{figure}
\begin{figure}[htbp]
\begin{center}
\includegraphics[bb=0 0 340 399,scale=0.4]{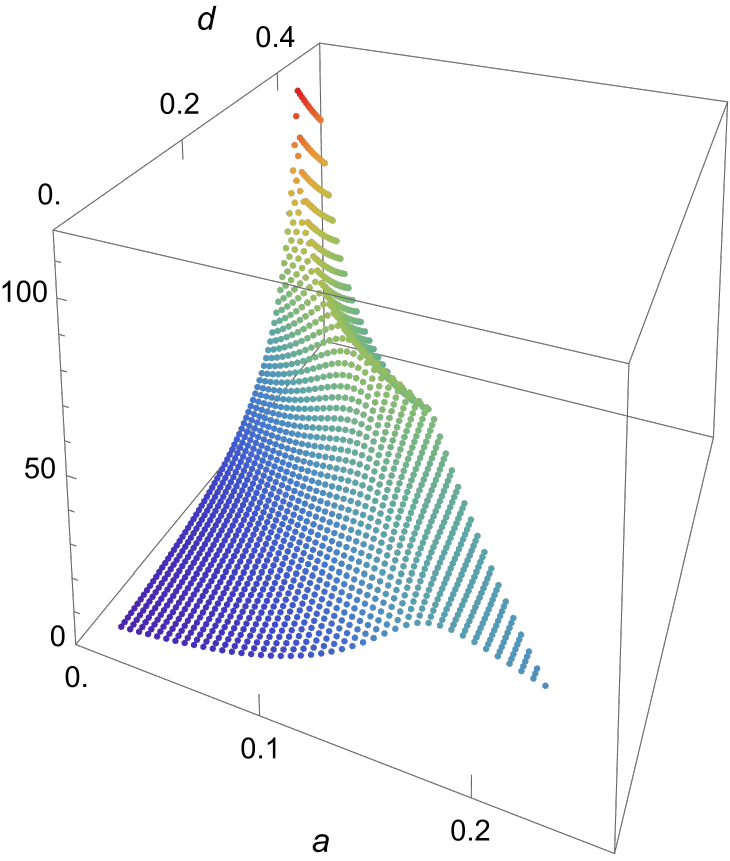} 
\end{center}
\caption{Plot of the first nonzero eigenvalue on $d$-$a(=b)$ plane for $G$.}
\label{fig:Gb=a}
\end{figure}     
\end{Example}

\newpage

\section{Case of G\"oring-Helmberg-Wappler problem}

In this section, we fix a vertex-weight $m_0 \colon V \to \mathbb{R}_{> 0}$ of unit total weight and an edge-length function 
$l \colon E \to \mathbb{R}_{> 0}$.
G\"oring-Helmberg-Wappler \cite{GoringHelmbergWappler2} introduced the following first-eigenvalue maximization problem. 

\begin{Problem}[\cite{GoringHelmbergWappler2}]\label{problemKKNspec}
Over all edge-weights $m_1$, subject to the normalization 
\begin{equation*}
\sum_{uv\in E} m_1(uv)l(uv)^2 = 1,
\end{equation*}
maximize the first nonzero eigenvalue $\lambda_1(G, (m_0,m_1))$ of $\Delta_{(m_0,m_1)}$.
\end{Problem}

Note that no extremal solutions other than maximizing solutions exist, since the problem of 
minimizing the reciprocal $1/\lambda_1(G, (m_0,m_1))$ is a convex optimization problem. 
In fact, G\"oring-Helmberg-Wappler prove that a maximizing solution $m_1$ always exists, and unless $m_1$ is degenerate, 
that is, unless $m_1(uv)=0$ for some $uv\in E$, an analogue of Theorem \ref{thm-embed-var-1} holds. 

\begin{Theorem}[G\"oring-Helmberg-Wappler \cite{GoringHelmbergWappler2}]\label{embed-edgelength}
Let $m_1$ be a maximizing solution of Problem \ref{problemKKNspec}, and assume that $m_1(uv)>0$ for all $uv\in E$. 
Then there exist eigenfunctions $\varphi_1, \cdots, \varphi_N$ of the eigenvalue $\lambda_1(G, (m_0,m_1))$ 
of $\Delta_{(m_0,m_1)}$ so that the map $\varphi = (\varphi_1, \cdots, \varphi_N)\colon V\to \R^N$ satisfies 
\begin{equation}\label{isometry}
l(uv) = \| \varphi(u) - \varphi(v) \|
\end{equation}
for all $uv\in E$. 
\end{Theorem}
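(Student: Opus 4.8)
The plan is to adapt the proof of Theorem~\ref{thm-embed-var-1} almost verbatim, the only structural changes being that the deformation variable is now the edge-weight $m_1$ (with $m_0$ and $l$ held fixed) and that the normalization $\sum_{uv\in E} m_1(uv)\,l(uv)^2 = 1$ replaces \eqref{constraint}. First I would specialize the general derivative formula \eqref{lambda1-derivative}: since $m_0$ is fixed we have $\dot{m}_0\equiv 0$, so for any analytic curve $m_1(t)$ with $m_1(0)=m_1$ the branches of the first nonzero eigenvalue satisfy $\dot{\lambda}^{(i)}_1 = \sum_{uv\in E}\dot{m}_1(uv)\,(\varphi^{(i)}(u)-\varphi^{(i)}(v))^2$. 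Accordingly I would define, for $\varphi$ in the $\lambda_1(G,(m_0,m_1))$-eigenspace $E_1$, the edge function $q_\varphi(uv)=(\varphi(u)-\varphi(v))^2$, and I would set $h\in C(E,\R)$, $h(uv)=l(uv)^2$. Note that the $\lambda_1$-term appearing in the definition of $q_\varphi$ in Section~2 is absent here precisely because $m_0$ does not vary, and this is exactly what will yield the genuine isometry \eqref{isometry} rather than the weaker relation \eqref{lipschitz}.

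Next I would prove the analogue of Claim~1: for every $\rho\colon E\to\R$ with $\langle\langle\rho,h\rangle\rangle=\sum_{uv\in E}\rho(uv)\,l(uv)^2=0$ there is $\varphi\in E_1$ with $\langle\langle\rho,q_\varphi\rangle\rangle=0$. Choosing an admissible curve with $\dot{m}_1(0)=\rho$, maximality of $m_1$ forces the left and right derivatives of $t\mapsto\lambda_1(G,(m_0,m_1(t)))$ at $t=0$, namely $\max_i\dot{\lambda}^{(i)}_1$ and $\min_i\dot{\lambda}^{(i)}_1$, to be $\geq 0$ and $\leq 0$ respectively (the direction $-\rho$ is equally admissible). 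Since the diagonalizing basis of analytic perturbation theory makes $\dot{\lambda}^{(i)}_1=\langle\langle\rho,q_{\varphi^{(i)}}\rangle\rangle$ the eigenvalues of the first-order perturbation operator on $E_1$, the extreme values of the continuous quadratic form $\varphi\mapsto\langle\langle\rho,q_\varphi\rangle\rangle$ on the connected unit sphere of $E_1$ are exactly these one-sided derivatives; the intermediate value theorem then produces the desired $\varphi$. The hypothesis to flag is that I only need $m_1$ to be a local maximizer, which is legitimate because, as remarked in this section, convexity of the reciprocal problem guarantees that a global—hence local—maximizer exists and is the unique extremal solution.

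The crux is the analogue of Claim~2: letting $\mathcal{C}$ be the closed convex cone generated by $\{q_\varphi\mid\varphi\in E_1\}$ in $C(E,\R)$, I claim $h\in\mathcal{C}$. Supposing otherwise, the hyperplane separation theorem furnishes $\nu\in C(E,\R)$ with $\langle\langle h,\nu\rangle\rangle>0$ and $\langle\langle q_\varphi,\nu\rangle\rangle<0$ for all $\varphi\in E_1\setminus\{0\}$. Setting $\rho=\nu-\tfrac{\langle\langle\nu,h\rangle\rangle}{\langle\langle h,h\rangle\rangle}\,h$ makes $\langle\langle\rho,h\rangle\rangle=0$, and I would then verify that $\langle\langle\rho,q_\varphi\rangle\rangle<0$ for every nonzero $\varphi\in E_1$. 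The one sign to check is that the correction term points the right way, which holds because $\langle\langle h,q_\varphi\rangle\rangle=\sum_{uv\in E}l(uv)^2(\varphi(u)-\varphi(v))^2\geq 0$ while $\langle\langle\nu,h\rangle\rangle>0$ and $\langle\langle h,h\rangle\rangle>0$. This strict negativity contradicts Claim~1, proving $h\in\mathcal{C}$.

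Finally, writing $h$ as a nonnegative combination of the $q_\varphi$ and absorbing the coefficients into the eigenfunctions via $q_{c\varphi}=c^2 q_\varphi$, I obtain eigenfunctions $\varphi_1,\dots,\varphi_N\in E_1$ with $l(uv)^2=\sum_{k=1}^N(\varphi_k(u)-\varphi_k(v))^2=\|\varphi(u)-\varphi(v)\|^2$ for $\varphi=(\varphi_1,\dots,\varphi_N)$; taking square roots gives \eqref{isometry}. I expect the main obstacle to be purely organizational—identifying the correct separating target $h=l^2$ and confirming the sign bookkeeping in the separation step—since no analytic input beyond the machinery of Section~2 is required, and the disappearance of the $\lambda_1$-term is what upgrades the conclusion from \eqref{lipschitz} to the exact isometry.
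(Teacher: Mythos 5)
Your proposal is correct and follows essentially the same route as the paper's own proof: specializing \eqref{lambda1-derivative} with $\dot{m}_0\equiv 0$, proving the analogue of Claim~1 via one-sided derivatives and the intermediate value theorem, and then separating against the target $h=l^2$ (in place of $\mathbf{1}$) to show $l^2$ lies in the convex cone generated by the functions $uv\mapsto(\varphi(u)-\varphi(v))^2$, $\varphi\in E_1$. Your added sign-check that $\langle\langle h, q_\varphi\rangle\rangle\geq 0$ and your observation that the absence of the $\lambda_1$-term (since $m_0$ is fixed) is what upgrades \eqref{lipschitz} to the exact isometry \eqref{isometry} are exactly the points implicit in the paper's argument.
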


\begin{Remark} In fact, 
G\"oring-Helmberg-Wappler \cite{GoringHelmbergWappler2} prove a more general result by allowing $m_1$ to be degenerate. 
They consider the dual problem of Problem \ref{problemKKNspec}, which reads: Over all maps 
$\varphi\colon V \to \R^{|V|}$ satisfying 
$$
\| \varphi(u) - \varphi(v) \|\leq l(uv),\,\, uv \in E\quad \mbox{and}\quad 
\overline{\varphi} = \sum _{u\in V} m_0(u) \varphi(u) = 0, 
$$
maximize the variance of $\varphi$, 
$$
\mathrm{var}(\varphi):= \sum_{u \in V} m_0(u) \| \varphi(u) \|^2.
$$
The duality implies 
\begin{equation}\label{weak duality}
\mathrm{var}(\varphi)\leq \frac{1}{\lambda_1(G, (m_0,m_1))} 
\end{equation}
for all $m_1$ and $\varphi$ satisfying the constraints. 
They prove that for fixed $m_0$ and $l$, there exist $m_1$ and $\varphi$ for which the equality sign holds 
in \eqref{weak duality}; In particular, $m_1$ and $\varphi$ are optimal solutions of Problem \ref{problemKKNspec} 
and the dual problem, respectively. 
The equality in \eqref{weak duality} implies that each component of the map $\varphi$ 
is an eigenfunction of the eigenvalue $\lambda_1(G, (m_0,m_1))$ of $\Delta_{(m_0,m_1)}$ 
and $m_1(uv)(l(uv)^2 - \| \varphi(u)-\varphi(v)\|^2)=0$ holds for all $uv\in E$. 
So, unless $m_1$ is degenerate, the last identity implies that the map $\varphi$ satisfies \eqref{isometry}. 
\end{Remark}

We give a new proof of Theorem \ref{embed-edgelength} by applying the method we used in the proof of
Theorem \ref{thm-embed-var-1}. 
This proof will be of some interest as it finds a solution of the dual problem without reference to the problem itself. 

\medskip\noindent
{\em Proof of Theorem \ref{embed-edgelength}.}\quad 
Take an analytic curve $m_1 \colon I \to (\R_{>0})^E$ such that $m_1(0)=m_1$ and 
$\sum_{uv\in E} m_1(t)(uv)\, l(uv)^2 = 1$ for all $t \in I$, where $0\in I\subset \R$ is a small interval. 
Setting $\dot{m}_0\equiv 0$ in \eqref{lambda1-derivative}, we obtain 
$$
\dot{\lambda}^{(i)}_1 = \sum_{uv \in E} \dot{m}_1(uv) \left( \varphi^{(i)}(u) - \varphi^{(i)}(v) \right)^2,\quad 
1\leq i\leq \mu, 
$$
where $\mu$ is the multiplicity of $\lambda_1(G, m_0,m_1)$. 


The following statement is an analogue of Claim 1 in the proof of Theorem \ref{thm-embed-var-1}:\,\, 
For any function $\rho\colon E\to \R$ satisfying $\sum_{uv\in E} \rho(uv)\, l(uv)^2=0$, 
there exists a first eigenfunction $\varphi$ of the Laplacian $\Delta_{(m_0,m_1)}$ such that
\begin{equation*}
\sum_{uv\in E} \rho(uv) \left( \varphi(u) - \varphi(v) \right)^2 = 0.
\end{equation*} 
The proof is also similar. 

Let $E_1(m_0,m_1)$ denote the $\lambda_1(G,(m_0,m_1))$-eigenspace of $\Delta_{(m_0,m_1)}$, and let 
$\mathcal{C}$ be the convex hull of the set 
$$
\left\{ \left( \varphi(u) - \varphi(v) \right)^2 \mid \varphi\in E_1(m_0,m_1) \right\}
$$
in $\R^E$. 
Then by a similar argument to that for Claim 2 in the proof of Theorem \ref{thm-embed-var-1}, one can verify 
that $l^2\in \mathcal{C}$. 
That is, there exist $\varphi_k\in E_1(m_0,m_1)$, $1\leq k\leq N$, such that 
$$
l(uv)^2 = \sum_{k=1}^N \left( \varphi_k(u) - \varphi_k(v) \right)^2 = \|\varphi(u) - \varphi(v) \|^2,\quad uv\in E, 
$$
where $\varphi=(\varphi_1,\dots,\varphi_N)$. 
This concludes the proof of  Theorem \ref{embed-edgelength}. 
\hfill{$\square$}





\end{document}